\newtheorem{theorem}{Theorem}
\newtheorem{lemma}[theorem]{Lemma}
\newtheorem{proposition}[theorem]{Proposition}
\newtheorem{corollary}[theorem]{Corollary}
\theoremstyle{definition}
\newtheorem*{notation}{Notation}
\newcommand{\cat}{\mathbf}
\newcommand{\inprod}[2]{\ensuremath{\langle #1\,|\,#2 \rangle}}
\newcommand{\inv}{\vphantom{f}^{-1}}
\newcommand{\id}{\mathrm{id}}
\renewcommand{\:}{\colon}
\newcommand{\NN}{\mathbb N}
\begin{document}
\title{Axioms for the category of Hilbert spaces}
\author{Chris Heunen}
\address{University of Edinburgh, School of Informatics, 10 Crichton Street, EH8 9AB, Edinburgh, United Kingdom}
\email{chris.heunen@ed.ac.uk}
\author{Andre Kornell}
\address{Tulane University, Department of Computer Science, 
6823 St.\ Charles Avenue, New Orleans, LA 70118, United States}
\email{akornell@tulane.edu}
\thanks{Chris Heunen was supported by EPSRC Fellowship EP/R044759/1. Andre Kornell was supported by AFOSR MURI grant FA9550-16-1-0082.}
\begin{abstract}
	We provide axioms that guarantee a category is equivalent to that of continuous linear functions between Hilbert spaces. 
	The axioms are purely categorical and do not presuppose any analytical structure.
	This addresses a question about the mathematical foundations of quantum theory raised in reconstruction programmes such as those of von Neumann, Mackey, Jauch, Piron, Abramsky, and Coecke.
\end{abstract}
\maketitle

Quantum mechanics has mathematically been firmly founded on Hilbert spaces and operators between them for nearly a century~\cite{vonneumann:foundations}. 
There has been continuous inquiry into the special status of this foundation since~\cite{redei:vonneumannhilbert,hardy:fiveaxioms,kapustin:exact}.
How are the mathematical axioms to be interpreted physically? 
Can the theory be reconstructed from a different framework whose axioms can be interpreted physically? 
Such reconstruction programmes involve a mathematical reformulation of (a generalisation of) the theory of Hilbert spaces and their operators, such as operator algebras~\cite{murrayneumann:ringsofoperators}, orthomodular lattices~\cite{jauch:foundations,piron:foundations}, and, most recently, categorical quantum mechanics~\cite{abramskycoecke:protocols,ghezlimaroberts:wstarcategories}.
The latter uses the framework of category theory~\cite{maclane:categories}, and emphasises operators more than their underlying Hilbert spaces. It postulates a category with structure that models physical features of quantum theory~\cite{heunenvicary}. 
The question of how ``to justify the use of Hilbert space''~\cite{piron:foundations} then becomes: which axioms guarantee that a category is equivalent to that of continuous linear functions between Hilbert spaces?
This article answers that mathematical question. 
The axioms are purely categorical in nature, and do not presuppose any analytical structure such as continuity, complex numbers, or probabilities.
The approach is similar to Lawvere's categorical characterisation of the theory of sets~\cite{lawvere:etcs}.

\section*{Axioms}


We consider six axioms on a (locally small) category $\cat{C}$.
The first two impose additional structure: an involution called a dagger and a tensor product.
All axioms have to respect the dagger. In particular, the right notion of inclusion is dagger monomorphism, and this notion permeates the last four axioms.
Axioms three and four demand finite (co)completeness; roughly, direct sums and equalisers.
The last two axioms ask that dagger subobjects behave well: intuitively, one should be able to divide by them and take directed suprema of them.
More precisely:
\begin{enumerate}
  \item[(D)]\label{axiom:dagger}	
    The category is equipped with a \emph{dagger} $\dag$.
    This is an operation that maps each morphism $f \colon H \to K$ to a morphism $f^\dagger \colon K \to H$ such that $f^{\dag\dag}=f$, and $(g \circ f)^\dag = f^\dag \circ g^\dag$, and $\id_H^\dag=\id_H$.
    If $f^\dag \circ f = \id_H$ we call $f$ a \emph{dagger monomorphism}, and if additionally $f \circ f^\dag = \id_K$ we call it a \emph{dagger isomorphism}.
    
  \item[(T)]\label{axiom:tensor}
    The category is equipped with a \emph{dagger monoidal} structure $\otimes$ whose unit $I$ is a \emph{simple monoidal separator}.
    The former means that the isomorphisms $(H \otimes K) \otimes L \simeq H \otimes (K \otimes L)$ and $I \otimes H \simeq H \simeq H \otimes I$ are dagger isomorphisms.
    The latter means that $I$ has exactly two subobjects, and that $f=g : H \otimes K \to L$ as soon as $f \circ (h \otimes k) = g \circ (h \otimes k)$ for all $h : I \to H$ and $k : I \to K$.

  \item[(B)]\label{axiom:biproducts}
  	Any two objects $H$ and $K$ have a \emph{dagger biproduct} $H \oplus K$, and there is a zero object $0$.
  	The latter means that for every object $H$ there exist unique morphisms $0 \to H$ and $H \to 0$. Hence any two objects $H$ and $K$ have a unique morphism $0_{H,K} : H \to K$ that factors through $0$.
  	The former means that there are dagger monomorphisms $i : H \to H \oplus K$ and $j : K \to H \oplus K$ that make $H \oplus K$ a coproduct of $H$ and $K$ and satisfy $j^\dag \circ i = 0_{H,K}$~\cite{heunenkarvonen:limitsindaggercategories}.

  \item[(E)]\label{axiom:equalisers}
    Any two morphisms $H \to K$ have a \emph{dagger equaliser} $f \colon E \to H$, that is, an equaliser $f$ that is a dagger monomorphism.

  \item[(K)]\label{axiom:kernels}
    Any \emph{dagger monomorphism} $N \to H$ is a dagger equaliser of some morphism $H \to K$ and $0_{H,K} : H \to K$.

  \item[(C)]\label{axiom:colimits}
  	The wide subcategory of dagger monomorphisms has \emph{directed colimits}. This means that if $(N, \leq)$ is a directed partially ordered set, $H_n$ are objects, and $H_{m \leq n} : {H_m \to H_n}$ are dagger monomorphisms, 
 	  then there is a universal object $H$ with dagger monomorphisms $H_n \to H$ that commute with the morphisms $H_{m \leq n}$.
\end{enumerate}

The categories $\cat{Hilb}_{\mathbb{R}}$ of real Hilbert spaces and $\cat{Hilb}_{\mathbb{C}}$ of complex Hilbert spaces with continuous linear functions satisfy these axioms: (D) is given by adjoints, (T) by tensor product, (B) by direct sum, (E) and (K) by closed subspaces, and (C) by the completion of the directed union.
We will show that any category $\cat{C}$ that satisfies these axioms is equivalent to one of these two categories.

\section*{Context}

This characterisation combines established research programmes on dagger categories and on orthomodular spaces. Dagger categories are categories with a certain self-duality and were first considered by MacLane~\cite{maclane:duality}; the terminology (D) was introduced by Selinger~\cite{selinger:cpm}. One research thread has been to find conditions that guarantee that the scalars of a monoidal dagger category form a field in the manner of Mitchell's embedding theorem for abelian categories~\cite{mitchell:embedding}. Abramsky and Coecke considered parts of (T) and (B) in observing that the scalars of a monoidal category with biproducts form a commutative semiring~\cite{abramskycoecke:protocols}. Vicary emphasised the compatibility of the dagger (D) with all other structure, added simplicity to (T), and introduced (E) to show that the scalars embed into a field~\cite{vicary:complex}. Heunen showed that adding (K) implies that the scalars are a field~\cite{heunen:embedding}. Vicary and Heunen both also embedded the scalars into $\mathbb{C}$ using additional assumptions that are not needed here.
Axioms (D)--(K) together imply that each object may be regarded as a Hermitian space.

The new axiom (C) implies that each such Hermitian space $\mathcal H$ is orthomodular. Orthomodular spaces were first studied by Kaplansky~\cite{kaplansky:forms}; the main research thread has been to find conditions that guarantee that an infinite-dimensional orthomodular space is a Hilbert space. Amemiya and Araki proved Piron's conjecture that a (real or complex) inner product space is an orthomodular space if and only if it is a Hilbert space~\cite{piron:conjecture,amemiyaaraki:piron}. Wilbur found algebraic conditions that imply that an orthomodular space is a Hilbert space~\cite{wilbur:standardquantumlogic}. Continuing research in this direction culminated in Sol{\`e}r's Theorem that any orthomodular space with an infinite orthonormal subset is a Hilbert space~\cite{gross:hilbertlattices, soler}. 
Sol{\`e}r's Theorem has been applied to characterise quantum logic as an orthomodular lattice~\cite{soler,maedamaeda,mayet:automorphisms,vetterlein:orthogonalityspaces}.
This research line about orthomodular lattices is of wide interest due to the reconstruction programme, which aims to derive the structure of Hilbert space from physical first principles. 
See~\cite{grinbaum:reconstruction} for a  literature survey of such reconstructions. 

\section*{Scalars}

We begin by looking at the morphisms $z \colon I \to I$. These are called \emph{scalars}, because in any category they form a monoid under composition, with $\id_I \colon I \to I$ as multiplicative unit.
In a monoidal category, as in axiom (T), this monoid is commutative, because the multiplication of two scalars $z,w \colon I \to I$ equals
\begin{equation}\label{eq:scalarscalarmultiplication}
  I \simeq I \otimes I \stackrel{w \otimes z}{\longrightarrow} I \otimes I \simeq I
\end{equation}
for which an Eckmann-Hilton argument holds~\cite[Lemma~2.3]{heunenvicary}.

The dagger of axiom (D) provides an involution on scalars.
The biproducts of axiom (B) affords a matrix calculus on morphisms~\cite[Section~2.2.4]{heunenvicary}. 
In particular, there are a codiagonal map $(i\,\, j) \colon I \oplus I \to I$ and a diagonal map $(i\,\,j)^\dag \colon I \to I \oplus I$ that let us add scalars $w,z \colon I \to I$:
\begin{equation}\label{eq:addition}
  I \stackrel{(i\,\,j)^\dag}{\longrightarrow} I \oplus I \stackrel{w \oplus z}{\longrightarrow} I \oplus I \stackrel{(i\,\, j)}{\longrightarrow} I.
\end{equation}
Together with the zero morphism $0_{I,I} \colon I \to I$, the scalars hence form an involutive commutative semiring~\cite[Lemma~2.15]{heunenvicary}.

\begin{lemma}\label{lem:field}
  $\mathcal{I}=\cat{C}(I,I)$ is a field with involution $z \mapsto z^\dagger$.
\end{lemma}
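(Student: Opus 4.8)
The plan is to promote the involutive commutative semiring $\mathcal I=\cat{C}(I,I)$ to a field by establishing two things: that every nonzero scalar is invertible, and that additive inverses exist. Nondegeneracy is immediate: since $I$ has two distinct subobjects it is not a zero object, and an object whose identity is a zero morphism is a zero object, so $\id_I\neq 0_{I,I}$, i.e.\ $1\neq 0$ in $\mathcal I$. I expect multiplicative invertibility to be the crux, and additive inverses to be a genuinely separate ingredient that I must locate in the dagger.

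First I would show that every nonzero scalar $z\colon I\to I$ is both monic and epic, exploiting that $I$ is \emph{simple}. Using the dagger equalisers of (E) and the kernels of (K), form the image $\mathrm{im}(z)=\ker(\mathrm{coker}(z))$, which is a dagger subobject of the codomain $I$. By simplicity it is either $0$ or $I$; it cannot be $0$, for then $z$ would factor through $0$ and vanish, so $\mathrm{im}(z)\simeq I$ and the image inclusion is a dagger isomorphism. Writing $z$ as this inclusion followed by its corestriction exhibits $z$ as epic. Dually, $z^\dagger\neq 0$ is epic, and taking daggers shows $z$ is monic. In passing this already gives that $\mathcal I$ is cancellative and has no zero divisors.

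The main obstacle is passing from \emph{monic and epic} to \emph{invertible}. This step must genuinely use simplicity and cannot rest on the dagger-kernel structure alone: the category is not abelian, and in $\cat{Hilb}$ a monic epic morphism need not be invertible (an injective operator with dense, non-closed range). The idea is a Schur-type argument: the monic $z$ represents a subobject of the simple object $I$, which can only be the top subobject $[\id_I]$ since $z\neq 0$, so $z$ is isomorphic as a subobject to $\id_I$ and is therefore an isomorphism. Making this watertight is where I would spend the effort, as it depends on (T) controlling \emph{all} subobjects of $I$, not merely the dagger ones. As a concrete indication that the biproduct does supply honest invertibility, I note that the graph morphism $v\colon I\to I\oplus I$ with components $\id_I$ and $z$ is split monic (with retraction the first projection $i^\dagger$); factoring it through its dagger-image, the corestriction is a split monic that is epic, hence an isomorphism $e$, so $\id+z^\dagger z=v^\dagger v=e^\dagger e$ is a unit for every $z$. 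I would fall back on such computations if the direct subobject argument needs reinforcement.

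Finally, additive inverses. Once nonzero scalars are units I would produce $-1$ from orthogonality in $I\oplus I$. The diagonal $\delta=i+j$ is nonzero with $\delta^\dagger\delta=\id+\id$, so its dagger-image $S\simeq I$ is a proper dagger subobject; its orthocomplement $S^\perp=\ker(m_S^\dagger)$, which exists by (E) and (K) and satisfies $I\oplus I\simeq S\oplus S^\perp$, is nonzero, for otherwise $I\simeq I\oplus I$ would force the two injections to become orthonormal scalars, which the biproduct relations forbid. Since $I$ is a separator, choose a nonzero $u\colon I\to S^\perp\hookrightarrow I\oplus I$; then $\delta^\dagger u=0$, so its components $\alpha=i^\dagger u$ and $\beta=j^\dagger u$ satisfy $\alpha+\beta=0$ with $\alpha\neq 0$, whence $-1=\beta\alpha^{-1}$ and $-z=(-1)z$ for every scalar. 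This is precisely the step distinguishing a field from a mere semifield such as $\mathbb Q_{\geq 0}$, so I would be careful to root it in the dagger rather than expect it for free. Together with the preceding paragraphs this yields that $\mathcal I$ is a field with involution $z\mapsto z^\dagger$.
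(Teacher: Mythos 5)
Your proposal is correct, and for the multiplicative half it is essentially the paper's argument: factor a nonzero scalar through its image using (E), use the two-subobject hypothesis on $I$ to force the image inclusion to be invertible, pass to the dagger to get that $z$ is monic as well as epic, and invoke simplicity once more to upgrade the nonzero mono to an isomorphism. (Your fallback computation that $\id + z^\dag z$ is a unit is fine but would not suffice on its own, since it only certifies elements of that special form.) The additive half rests on the same underlying fact as the paper's proof, namely that the kernel of the codiagonal $\delta^\dag = (i\;j)\colon I \oplus I \to I$ is nonzero, so that a nonzero vector in it has components $\alpha,\beta$ with $\alpha+\beta=0$ and $\alpha \ne 0$, whence $-1 = \beta\alpha^{-1}$; but you reach the nonvanishing of that kernel by a different route. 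The paper argues contrapositively: a semifield without negatives is zerosumfree, which by (K) would make $\ker(i\;j)=0$ and hence $(i\;j)$ monic, contradicting $(i\;j)\circ i = \id = (i\;j)\circ j$ with $i \ne j$. You instead argue that $S^\perp = \ker(\delta^\dag) = 0$ would give $I \oplus I \simeq \mathrm{im}(\delta) \simeq I$. That is indeed absurd, but your one-line reason (``orthonormal scalars'') is loose: the isomorphism $I \to \mathrm{im}(\delta)$ need not be a dagger isomorphism (its dagger-square is $1+1$), so the injections do not transport to an orthonormal pair. The clean finish for your version is that $I \oplus I \simeq I$ would make $I \oplus I$ simple, so the nonzero mono $i$ would represent the top subobject and hence be invertible, and then $j^\dag \circ i = 0$ would force $j=0$, contradicting $j^\dag \circ j = \id$. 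The paper's ``codiagonal is not monic'' argument is the more economical way to the same kernel.
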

\begin{proof}[Proof (sketch)]
  To see that there are multiplicative inverses, let $z \in \mathcal{I}$. By axiom (E), $z$ factors as an epimorphism $I \to H$ followed by a dagger monomorphism $f \colon H \to I$. It then follows from the simplicity of $I$ in axiom (T) that $f$ is either zero or invertible. If $f=0$ then $z=0$. If $f$ is invertible then $z$ is an epimorphism. But then $z^\dag$ is monic and hence zero or invertible. In either case $z$ is zero or invertible.

  It follows that $\mathcal{I}$ must either have additive inverses or be zerosumfree. The latter term means that $w+z=0$ implies $w=z=0$. But this would contradict axiom (K), because the equaliser $\ker(f)$ of $f \colon H \to K$ and $0_{H,K}$ is zero if and only if $f$ is a monomorphism, but according to~\eqref{eq:addition} $\ker(i\,\, j)=0$ if $w+z=0$ whereas $(i\,\,j)\neq 0$.
  For details, see~\cite[Lemma~4.5]{heunen:embedding}. 
\end{proof}

We will see in Proposition~\ref{prop:hilbertspace} below that in fact $\mathcal{I}$ must be $\mathbb{R}$ (with the trivial involution) or $\mathbb{C}$. Furthermore, any morphism $f \colon H \to K$ can be multiplied by a scalar $z \colon I \to I$ to get:
\begin{equation}\label{eq:scalarmultiplication}
  H \simeq I \otimes H \stackrel{z \otimes f}{\longrightarrow} I \otimes K \simeq K.
\end{equation}
Similarly, the addition of~\eqref{eq:addition} works just as well for parallel morphisms $f,g \colon H \to K$ of arbitrary type. It follows from Lemma~\ref{lem:field} that the morphisms $H \to K$ always form a vector space over $\mathcal{I}$.
In particular, we may consider morphisms $I \to H$ to be \emph{vectors}.

\section*{Projections}

Now consider \emph{projections} on an object $H$, that is, morphisms $p \colon H \to H$ satisfying $p^\dag \circ p = p$. They are partially ordered by $p \leq q$ if and only if $q \circ p = p$.
We next show that arbitrary subsets of projections have least upper bounds, and there is an \emph{orthocomplement} $p^\perp$ satisfying $p^{\perp\perp}=p$, and $p \leq q$ if and only if $q^\perp \leq p^\perp$.

\begin{lemma}\label{lem:completeortholattice}
  The projections on an object $H$ form a complete ortholattice with $p^\perp = \id_H - p$.
\end{lemma}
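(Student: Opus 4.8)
The plan is to identify projections with dagger subobjects of $H$ and to transport the complete-ortholattice structure across this identification, building arbitrary joins out of finite ones by means of the directed colimits of axiom~(C). First I would record that a projection is exactly a self-adjoint idempotent: applying the dagger to $p^\dagger \circ p = p$ gives $p^\dagger \circ p = p^\dagger$, hence $p = p^\dagger$ and therefore $p \circ p = p^\dagger \circ p = p$. The three poset laws are then immediate from $p \leq q \Leftrightarrow q \circ p = p$, using self-adjointness for antisymmetry and transitivity. Writing $p^\perp = \id_H - p$, which is again a self-adjoint idempotent, I would check that $^\perp$ is an order-reversing involution: $p^{\perp\perp} = p$ is clear, and $p \leq q$, i.e.\ $q \circ p = p$, is equivalent by the dagger to $p \circ q = p$, which is exactly $p^\perp \circ q^\perp = q^\perp$, that is $q^\perp \leq p^\perp$. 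The morphisms $0$ and $\id_H$ are bottom and top, and if $r \leq p$ and $r \leq p^\perp$ then $p \circ r = r$ and $(\id_H - p)\circ r = r$, forcing $r = 0$; thus $p \wedge p^\perp = 0$.

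Next I would set up the bijection between projections and dagger subobjects. Given a projection $p$, let $m \colon M \to H$ be the dagger equaliser of $p$ and $\id_H$ supplied by axiom~(E). Since $p \circ p = \id_H \circ p$, the morphism $p$ factors through $m$, say $p = m \circ e$; applying $m^\dagger$ and using $m^\dagger \circ m = \id_M$ gives $e = m^\dagger \circ p$, so $p = m \circ m^\dagger \circ p$, and taking daggers yields $p = p \circ m \circ m^\dagger$. As $m$ equalises $p$ and $\id_H$ we also have $p \circ m = m$, hence $p \circ m \circ m^\dagger = m \circ m^\dagger$, and therefore $p = m \circ m^\dagger$. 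Conversely every dagger mono $m$ yields a projection $m \circ m^\dagger$, and these assignments are mutually inverse and order-preserving, with $p \leq q$ corresponding to inclusion of dagger subobjects. This reduces the lemma to proving that the dagger subobjects of $H$ form a complete lattice.

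For finite joins I would use the epimorphism--dagger-monomorphism factorisation already invoked in the proof of Lemma~\ref{lem:field}: given dagger subobjects $m \colon M \to H$ and $n \colon N \to H$, factor the copairing $(m\,\,n) \colon M \oplus N \to H$ from axiom~(B) as an epimorphism followed by a dagger mono $\iota \colon J \to H$; then $\iota$ is the least dagger subobject through which both $m$ and $n$ factor, i.e.\ the join. Arbitrary joins then come from axiom~(C): for a family $\{p_i\}$ with associated dagger subobjects $m_i$, the finite joins form a directed system of dagger monos into $H$, whose directed colimit $u \colon M_\infty \to H$ exists and is itself a dagger mono, because $H$ together with the finite-join inclusions is a cocone in the subcategory of dagger monos. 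The resulting dagger subobject is the supremum $\bigvee_i p_i$, since any common upper bound gives a competing cocone and hence a factorisation through $u$. A poset with all joins (the empty join being $0$) is automatically a complete lattice, and since $^\perp$ is an order-reversing bijection it sends joins to meets, so $(p \vee p^\perp)^\perp = p^\perp \wedge p = 0$ and thus $p \vee p^\perp = 0^\perp = \id_H$; together with $p \wedge p^\perp = 0$ this establishes the ortholattice laws.

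The main obstacle I anticipate is the passage to infinite joins: one must verify that the colimit supplied by axiom~(C) genuinely computes the supremum in the projection order, and in particular that the universal map $M_\infty \to H$ is a dagger monomorphism, that its image dominates every $p_i$, and that it is dominated by every common upper bound. The finite-join step also rests on a functorial epimorphism--dagger-monomorphism factorisation, whose existence together with the diagonal fill-in I would need to extract carefully from axioms~(E) and~(K) before the colimit argument can be run.
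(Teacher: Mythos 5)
Your proposal is correct and follows the same overall architecture as the paper's proof: identify projections with dagger subobjects, obtain finite joins, and then get arbitrary joins as directed suprema of finite ones via axiom~(C), with $\ker(f^\dag)$ (equivalently $\id_H-p$) as orthocomplement. You differ in two localised steps. First, where the paper cites the dagger-kernel literature and axiom~(K) for the order isomorphism between projections and dagger subobjects, you prove it directly: the dagger equaliser of $p$ and $\id_H$ recovers $p$ as $m\circ m^\dag$, which is a clean, self-contained argument using only (E) and (D). Second, for binary joins the paper first builds binary \emph{meets} from pullbacks (axioms (B) and (E)) and then dualises through the orthocomplement, whereas you construct the join directly as the image of the copairing $(m\;\,n)\colon M\oplus N\to H$ under an epi/dagger-mono factorisation. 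Your route is equally valid but, as you note yourself, its leastness claim is exactly where axiom~(K) re-enters: the image $\ker(\mathrm{coker}\,f)$ is a priori only the least \emph{kernel} through which $f$ factors, and (K) is what identifies kernels with arbitrary dagger monomorphisms. So both arguments consume the same axioms in the end; the paper's meet-first version avoids setting up the factorisation system, while yours avoids the pullback construction and makes the subobject correspondence explicit. The directed-colimit step, including the verification that the mediating morphism into $H$ is a dagger monomorphism and computes the supremum, is handled the same way in both.
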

\begin{proof}[Proof (sketch)]
  Projections on $H$ are in one-to-one correspondence with \emph{dagger subobjects} of $H$, that is, dagger isomorphism classes of dagger monomorphisms $f \colon N \to H$. These are also partially ordered, by $f \leq g$ when $f$ factors through $g$. Axiom (K) provides an order isomorphism between projections and dagger subobjects, 
  see~\cite[Lemma~5 and Proposition~12]{heunenjacobs:daggerkernelcategories}.
  By Axiom (C), these partially ordered sets furthermore have least upper bounds of directed subsets.

  The orthocomplement of a dagger subobject $f \colon N \to H$ is given by $\ker(f^\dag)$. Axioms (D) and (E) ensure that it indeed satisfies $f^{\perp\perp}=f$, and $f \leq g$ if and only if $g^\perp \leq f^\perp$~\cite[Lemma~1]{heunenjacobs:daggerkernelcategories}.

  By axioms (B) and (E), pullbacks exist, and so any two dagger subobjects have a greatest lower bound~\cite[Lemma~2]{heunenjacobs:daggerkernelcategories}. 
  But because of the orthocomplementation any two dagger subobjects then have a least upper bound. Hence any finite number of projections have a least upper bound.

  Now, the least upper bound of an arbitrary subset of projections is the least upper bound of the directed family of least upper bounds of finite subsets, and hence exists.
  Finally, it is clear that $p^\perp = \id_H - p$ is an orthocomplement.
\end{proof}

\begin{notation}
  Morphisms out of $I$ will play a dual role, as we will also regard them as vectors in a Hilbert space. To distinguish between these two roles we will use the following notation.
  For an object $H$, we write $\mathcal{H}$ for the homset $\cat{C}(I,H)$.
  For a morphism $f \colon H \to K$, we write $F \colon \mathcal{H} \to \mathcal{K}$ for the function that postcomposes with $f$.
\end{notation}

We have already established that $\mathcal{H}$ is a vector space (over $\mathcal{I}$), and that $F$ is a linear function.
In fact, axiom (D) makes $\mathcal{H}$ into a Hermitian space~\cite[Definition~1.1]{holland:soler}: the Hermitian form $\inprod{f}{g}$ of $f,g \in \mathcal{H}$ is $g^\dag \circ f \in \mathcal{I}$; nondegeneracy follows from axiom (E)~\cite[Lemma~2.5]{vicary:complex}.
Hence we can speak about the orthocomplement $V^\perp = \{ f \in \mathcal{H} \mid \forall v \in V \colon \inprod{f}{v}=0 \}$ of a subspace $V \subseteq \mathcal{H}$.
A subspace $V$ is called \emph{closed} when $V^{\perp \perp}=V$.
We now justify using the same notation for this orthocomplementation, which ostensibly differs from that of Lemma~\ref{lem:completeortholattice}.

\begin{lemma}\label{lem:ortholattices}
  The function $p \mapsto p \circ \mathcal{H} = \{ p \circ h \mid h \in \mathcal{H} \}$ is an isomorphism from the ortholattice of projections on $H$ to the ortholattice of closed subspaces of $\mathcal{H}$. 
\end{lemma}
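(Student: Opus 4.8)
The plan is to exhibit the assignment $\Phi\colon p\mapsto p\circ\mathcal{H}$ as an isomorphism of complete ortholattices by settling everything except surjectivity through direct computation, and then deploying axioms (E), (K), and especially (C) to reach every closed subspace. I would begin with the pointwise description $\Phi(p)=\{w\in\mathcal{H}\mid p\circ w=w\}$: if $w=p\circ h$ then $p\circ w=p\circ p\circ h=p\circ h=w$, and the converse is immediate. Here I use that a projection is self-adjoint, $p^\dagger=p^\dagger\circ p=p$, the first equality coming from applying the dagger to $p^\dagger\circ p=p$. Using this and nondegeneracy of the Hermitian form, the orthocomplement of $\Phi(p)$ falls out in one stroke: for all $h\in\mathcal{H}$ we have $\inprod{f}{p\circ h}=(p\circ h)^\dagger\circ f=h^\dagger\circ p\circ f=\inprod{p\circ f}{h}$, so $f\perp\Phi(p)$ if and only if $\inprod{p\circ f}{h}=0$ for every $h$, which by nondegeneracy means $p\circ f=0$, i.e.\ $f\in\Phi(p^\perp)$ with $p^\perp=\id_H-p$. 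Thus $\Phi(p)^\perp=\Phi(p^\perp)$; applying this twice and using $p^{\perp\perp}=p$ shows each $\Phi(p)$ is a closed subspace and that $\Phi$ preserves orthocomplements.

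Next I would show $\Phi$ is an order embedding. Axiom (T) makes $I$ a separator, so parallel morphisms out of $H$ agree once they agree after precomposition with all vectors $I\to H$. Consequently $\Phi(p)\subseteq\Phi(q)$ forces $q\circ p\circ h=p\circ h$ for every $h$, since each $p\circ h$ then lies in $\Phi(q)$ and is fixed by $q$; hence $q\circ p=p$, that is $p\le q$. The reverse implication is trivial, and injectivity follows. I would also note that the representable functor $\cat{C}(I,-)$ preserves limits, so the pullback computing the meet of two dagger subobjects in Lemma~\ref{lem:completeortholattice} is carried to the intersection of the corresponding subspaces; thus $\Phi$ preserves binary meets, and with orthocomplements, binary joins.

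For surjectivity the key is to realise a closed subspace as the orthocomplement of a kernel rather than as an image, which sidesteps any need to analyse image objects. Given a vector $v\colon I\to H$, axioms (E) and (K) furnish the dagger kernel $k\colon\ker(v^\dagger)\to H$ of $v^\dagger\colon H\to I$, and since $\cat{C}(I,-)$ preserves this equaliser, $\Phi$ of the associated projection is exactly $\{w\mid v^\dagger\circ w=0\}=(\mathcal{I}v)^\perp$; hence $\overline{\mathcal{I}v}=(\mathcal{I}v)^{\perp\perp}$ lies in the image of $\Phi$. As the image is closed under binary joins and orthocomplements, it contains the closure $\overline{W}$ of every finitely generated subspace $W\subseteq\mathcal{H}$. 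Now an arbitrary closed subspace $V$ equals $\bigvee_{v\in V}\overline{\mathcal{I}v}$, which is the directed join of the family $\{\overline{W}\mid W\subseteq V\text{ finitely generated}\}$. So it remains to prove that the image of $\Phi$ is closed under directed joins: the directed colimit $m\colon N\to H$ of the corresponding dagger subobjects $m_i\colon N_i\to H$ provided by axiom (C) yields a projection $p=m\circ m^\dagger$, and I must verify $\Phi(p)=\overline{\bigcup_i\Phi(p_i)}$.

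I expect this last identity to be the main obstacle. The inclusion $\supseteq$ is free from monotonicity and closedness, so everything rests on showing $\Phi(p)$ does not overshoot the closure. Unwinding the definitions, and writing $c=m^\dagger\circ u$, this reduces to the density statement that the images of the colimit injections $\mu_i\colon N_i\to N$ have trivial orthocomplement in $\mathcal{N}=\cat{C}(I,N)$: if $\mu_i^\dagger\circ c=0$ for all $i$, then $c=0$; equivalently, the colimit object carries no vector orthogonal to every stage. I would prove this from the universal property in axiom (C), since a nonzero such $c$ would determine a dagger subobject of $N$ orthogonal to all $N_i$, against which one can test the colimit cocone to contradict its universality. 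Converting the purely categorical universal property of (C) into this orthogonal density statement, which is the incarnation of topological completeness, is the crux, and the one place where the directed-colimit axiom is genuinely indispensable.
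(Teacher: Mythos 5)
Your argument is correct in substance, and its first half (the fixed-point description of $p \circ \mathcal{H}$, the identity $(p\circ\mathcal{H})^\perp = p^\perp\circ\mathcal{H}$, closedness, and the order embedding via the separator) is essentially the paper's. The surjectivity half takes a genuinely different route. The paper assigns to each nonzero $h\in\mathcal{H}$ the projection $p_h = (h\circ h^\dag)\cdot(h^\dag\circ h)^{-1}$, sets $p=\bigvee_{v\in V\setminus\{0\}}p_v$ using the completeness of the projection lattice already secured in Lemma~\ref{lem:completeortholattice}, and proves $p\circ\mathcal{H}=V$ by a purely order-theoretic sandwich: $p\circ v=v$ gives $V\subseteq p\circ\mathcal{H}$, while for $h\in V^\perp$ one has $p_v\le p_h^\perp$ for every $v$, hence $p\le p_h^\perp$ by leastness, hence $V^\perp\subseteq(p\circ\mathcal{H})^\perp$, and closedness of both sides finishes. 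That argument never needs to show that $\Phi$ preserves the directed joins themselves. You instead assemble $V$ from the lines $\overline{\mathcal{I}v}$ (via kernels of $v^\dag$ and double orthocomplementation) through finite joins and one directed join, which obliges you to prove that $\Phi$ sends the axiom-(C) colimit to the closure of the union, and, as you correctly identify, this reduces to the density statement that no nonzero vector of the colimit object is orthogonal to every stage. That statement is true, and your sketch is the right idea; to complete it, factor each colimit injection through the dagger kernel of $c^\dag$, check these factorisations are dagger monomorphisms forming a cocone, and use uniqueness of the mediating morphism to conclude that $\ker(c^\dag)$ is all of the colimit object --- which is word for word the argument the paper gives later for Lemma~\ref{lem:standardbasis} in the special case of $I^A$. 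So your route goes through at the cost of proving a general form of that density lemma here rather than later; what it buys is a reusable lemma and a transparent account of exactly where axiom (C) enters, whereas the paper's construction is shorter because the orthocomplementation of the complete projection lattice already does the work of ``not overshooting''. (Two small slips: the undefined $u$ in ``$c=m^\dagger\circ u$'' should be the test vector, and the subobject you should test against is $\ker(c^\dag)$, which \emph{contains} every stage, rather than a subobject orthogonal to them.)
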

\begin{proof}
  For a projection $p:H \to H$ and a vector $h \in \mathcal{H}$:
  \begin{align*}
    h \in p^\perp \circ \mathcal{H}
    & \iff (\id_H-p) \circ h = h
    \iff p \circ h = 0 \\
    & \iff \forall f \in \mathcal{H} \colon (p \circ f)^\dag \circ h = f^\dag \circ p \circ h = 0 \\
    & \iff h \in (p \circ \mathcal{H})^\perp.
  \end{align*}
  The third equivalence uses that $I$ is a monoidal separator as in axiom (T).
  Thus $p^\perp \circ \mathcal{H} = (p \circ \mathcal{H})^\perp$, and $(p \circ \mathcal{H})^{\perp\perp} = p \circ \mathcal{H}$ is indeed a closed subspace of $\mathcal{H}$.

  Observe that $0_{H,H} \circ \mathcal{H} = \{0_{I,H}\}$, that $\id_H \circ \mathcal{H} = \mathcal{H}$, and that $p \circ \mathcal{H} \subseteq q \circ \mathcal{H}$ when $p \leq q$.
  Hence $p \mapsto p \circ \mathcal{H}$ is a monotone function that respects orthocomplementation. It remains to show that it  is an order embedding and that it is surjective.

  Let $p$ and $q$ be projections on $H$. 
  If $p \circ \mathcal{H} \subseteq q \circ \mathcal{H}$, then $q \circ p \circ f = p \circ f$ for all $f \in \mathcal{H}$, so by axiom (T), $q \circ p = p$, that is, $p \leq q$.
  Therefore, the function $p \mapsto p \circ \mathcal{H}$ is an order embedding, and in particular injective.

  To establish that it is surjective, let $V$ be a closed subspace of $\mathcal{H}$. For a nonzero vector $h \in \mathcal{H}$,  define a projection
  \[
    p_h = (h \circ h^\dag) \cdot (h^\dag  \circ h)\inv : H \to H
  \]
  where the dot denotes the scalar multiplication of~\eqref{eq:scalarmultiplication}. 
  Set $p = \bigvee_{v \in V\setminus\{0\}} p_v$. For each vector $v \in V\setminus\{0\}$ then $p_v \circ v = v$ and so $p \circ v = p \circ p_v  \circ v = p_v \circ v = v$. Thus $V \subseteq p \circ \mathcal{H}$.

  Let $h$ be a nonzero vector in $\mathcal{H}$, and assume that $h \in V^\perp$. Then $p_h \circ p_v = 0$ for all $v \in V\setminus\{0\}$; in other words, $p_v \leq p_h^\perp$. 
  It follows that $p = \bigvee_{v \in V \setminus \{0\}} p_v \leq p_h^\perp$, and so $p_h \circ p =0$. 
  Hence $p_h \circ \mathcal{H} \perp p \circ \mathcal{H}$, and in particular $h \perp p \circ \mathcal{H}$. Therefore any vector $h \in \mathcal{H}$ that is orthogonal to $V$ is also orthogonal to $p \circ \mathcal{H}$. Since both $V$ and $p \circ \mathcal{H}$ are closed subspaces of $\mathcal{H}$, we conclude that $p \circ \mathcal{H} \subseteq V$. Altogether, $p \circ \mathcal{H} = V$, and the function $p \mapsto p \circ \mathcal{H}$ is surjective.
\end{proof}

\section*{Orthogonality}

A Hermitian space $\mathcal{H}$ is called \emph{orthomodular} when $\mathcal{H} = V \oplus V^\perp$ for all closed subspaces $V \subseteq \mathcal{H}$~\cite[Definition~1.2]{holland:soler}.

\begin{lemma}\label{lem:orthomodularspace}
  $\mathcal{H}$ is an orthomodular space.
\end{lemma}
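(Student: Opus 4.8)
The plan is to translate the statement about the closed subspace $V$ into a statement about a single projection, where the decomposition can then be read off directly. First I would use Lemma~\ref{lem:ortholattices} to choose a projection $p \colon H \to H$ with $p \circ \mathcal{H} = V$. Taking the dagger of the defining equation $p^\dag \circ p = p$ yields $p^\dag \circ p = p^\dag$, so that $p^\dag = p$ and hence $p \circ p = p^\dag \circ p = p$; thus $p$ is a self-adjoint idempotent. Consequently $V = p \circ \mathcal{H}$ is precisely the set of vectors $v \in \mathcal{H}$ fixed by $p$, that is, those with $p \circ v = v$. Moreover, the computation in the proof of Lemma~\ref{lem:ortholattices} identifies the form-orthocomplement as $V^\perp = (p \circ \mathcal{H})^\perp = p^\perp \circ \mathcal{H}$, which is precisely the set of vectors $w$ with $p \circ w = 0$, since $p^\perp = \id_H - p$.

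For the existence of the decomposition, I would observe that every $h \in \mathcal{H}$ satisfies
\[
  h = \id_H \circ h = (p + p^\perp) \circ h = p \circ h + p^\perp \circ h,
\]
where $p \circ h \in p \circ \mathcal{H} = V$ and $p^\perp \circ h \in p^\perp \circ \mathcal{H} = V^\perp$. Hence $\mathcal{H} = V + V^\perp$. For directness, suppose $v \in V \cap V^\perp$. Then $p \circ v = v$ because $v \in V$, while $p \circ v = 0$ because $v \in V^\perp$; therefore $v = 0$, and $V \cap V^\perp = \{0\}$. Combining the two parts yields $\mathcal{H} = V \oplus V^\perp$, as required.

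The step I expect to be the genuine content, rather than bookkeeping, is guaranteeing that the sum is direct. Over an arbitrary field with involution one cannot in general deduce $V \cap V^\perp = \{0\}$ from nondegeneracy of the Hermitian form alone, because the form may be isotropic, admitting nonzero vectors $v$ with $\inprod{v}{v}=0$. The device that sidesteps this difficulty is to work with the projection $p$ supplied by Lemma~\ref{lem:ortholattices} rather than reasoning with the form itself: the idempotent $p$ both manufactures the splitting $h = p \circ h + p^\perp \circ h$ and forces directness, since a vector lying in both $V$ and $V^\perp$ must simultaneously be fixed and annihilated by $p$. In effect, the orthomodularity of $\mathcal{H}$ is a direct consequence of the fact, established in the previous two lemmas, that every closed subspace is the image of a projection.
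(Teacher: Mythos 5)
Your proof is correct and takes essentially the same route as the paper's: both invoke Lemma~\ref{lem:ortholattices} to produce a projection $p$ with $V = p \circ \mathcal{H}$ and then split each vector as $h = p \circ h + p^\perp \circ h$. The only differences are cosmetic --- the paper checks $p^\perp \circ h \in V^\perp$ by a direct computation with the Hermitian form where you cite the identity $(p \circ \mathcal{H})^\perp = p^\perp \circ \mathcal{H}$ already established in the previous proof, and you spell out the directness of the sum ($V \cap V^\perp = \{0\}$), which the paper leaves implicit.
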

\begin{proof}
  Let $V$ be a closed subspace of $\mathcal{H}$, and let $h$ be a vector in $\mathcal{H}$. Lemma~\ref{lem:ortholattices} provides a projection $p$ on $H$ such that $V = p \circ \mathcal{H}$.
  Trivially $p \circ h \in V$. Also $p^\perp \circ h \in V^\perp$ because for all $v \in V$:
  \[
    v^\dag \circ (p^\perp \circ h) 
    = ((\id_H - p) \circ v)^\dag \circ h 
    = (v - (p \circ v))^\dag \circ h 
    = 0.
  \]
  We conclude that $h = (p \circ h) + (p^\perp \circ h) \in V + V^\perp$ and more generally that $\mathcal{H} = V \oplus V^\perp$. 
\end{proof}

The next step uses orthogonal subsets of $\mathcal{H}$ that can be infinite. This requires some preparation.
If $R$ is a finite set, axiom (B) shows that a coproduct $\bigoplus_R I$ of copies of $I$ indexed by $R$ exists; denote it by $I^R$.
We may assume that $I^R = I$ when $R$ is a singleton.
Write $i_{R,S}: I^R \to I^S$ for the canonical morphism induced by an inclusion $R \subseteq S$ of finite sets. 
Then $i_{S,T} \circ i_{R,S} = i_{R,T}$ for all finite sets $R \subseteq S \subseteq T$.
Moreover, using the matrix calculus for morphisms afforded by axiom (B)~\cite[Section~2.2.4]{heunenvicary}, it is clear that $i_{R,S}$ is a dagger monomorphism.



Now let $A$ be an arbitrary set. 
Write $R \Subset A$ if $R$ is a finite subset of $A$. 
The morphisms $i_{R,S}$ form a diagram of dagger monomorphisms in $\cat{C}$ indexed by the category 
of finite subsets $R \Subset A$ and inclusions between them.
By axiom (C) this diagram has a colimit $I^A$, with universal dagger monomorphisms $i_{R,A} \colon I^R \to I^A$.
(If $A$ is finite, the diagram has a terminal object, so both definitions of $I^A$ agree.)

\begin{proposition}\label{prop:hilbertspace}
  The involutive field $\mathcal{I}$ is isomorphic to $\mathbb{R}$ (with the trivial involution) or $\mathbb{C}$, and $\mathcal{H}$ is a Hilbert space for any object $H$.
\end{proposition}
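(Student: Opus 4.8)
The plan is to route everything through Sol\`er's theorem, the standard passage from orthomodular spaces to Hilbert spaces, so the first task is to manufacture the infinite orthonormal sequence that its hypothesis demands. I would work inside $\cat{C}(I, I^{\NN})$, the vectors of the object $I^{\NN}$ constructed just above as the directed colimit of the $i_{R,S}$. Setting $e_n = i_{\{n\},\NN} \colon I \to I^{\NN}$, I would verify orthonormality: $\inprod{e_n}{e_n} = e_n^\dagger \circ e_n = \id_I$ because $i_{\{n\},\NN}$ is a dagger monomorphism, while for $m \neq n$ both $e_m$ and $e_n$ factor through $i_{\{m,n\},\NN}$, so that $\inprod{e_m}{e_n}$ collapses (using that $i_{\{m,n\},\NN}$ is a dagger monomorphism) to $j^\dagger \circ i = 0_{I,I}$, the orthogonality of the two biproduct injections of $I^{\{m,n\}} = I \oplus I$ from axiom (B). Since $\cat{C}(I,I^{\NN})$ is orthomodular by Lemma~\ref{lem:orthomodularspace} applied to $H = I^{\NN}$, Sol\`er's theorem~\cite{holland:soler} forces $\mathcal{I}$ to be $\mathbb{R}$, $\mathbb{C}$, or $\mathbb{H}$ with its standard involution. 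As $\mathcal{I}$ is a field by Lemma~\ref{lem:field}, hence commutative, the quaternions are excluded, leaving $\mathbb{R}$ (trivial involution) or $\mathbb{C}$.

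It then remains to upgrade each $\mathcal{H}$ to a genuine Hilbert space, and the first hurdle is positive-definiteness, since a priori the Hermitian form could be indefinite. I would prove $\inprod{h}{h} \geq 0$ for every $h \in \mathcal{H}$ by an isotropic-vector argument inside the object $H \oplus I$. The scalar $s = \inprod{h}{h} = h^\dagger \circ h$ is self-adjoint, hence a real number; suppose $s < 0$. Choosing $\lambda \in \mathcal{I}$ with $\lambda^\dagger \circ \lambda = -s$ (a positive real has a square root in $\mathbb{R} \subseteq \mathcal{I}$), the vector $v = (i \circ h) + (j \circ \lambda) \colon I \to H \oplus I$, with $i,j$ the biproduct injections, satisfies $\inprod{v}{v} = s + \lambda^\dagger \circ \lambda = 0$ while $v \neq 0$. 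The closed subspace $V = (\mathcal{I}\cdot v)^{\perp\perp}$ contains $v$, and since $\inprod{v}{v}=0$ we also have $v \in (\mathcal{I}\cdot v)^\perp = V^\perp$, contradicting $V \cap V^\perp = 0$, which follows from orthomodularity of $H \oplus I$ (Lemma~\ref{lem:orthomodularspace}). Hence $s \geq 0$, and combined with nondegeneracy $\inprod{h}{h} > 0$ for $h \neq 0$, so each $\mathcal{H}$ is a genuine inner-product space.

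Completeness then finishes the proof. If $\mathcal{H}$ is finite-dimensional over $\mathcal{I}$ it is automatically complete. If it is infinite-dimensional, positive-definiteness lets me orthogonalise and normalise any countable independent family (the required square roots again lie in $\mathbb{R} \subseteq \mathcal{I}$) into an infinite orthonormal sequence, and Sol\`er's theorem applied to $\mathcal{H}$ itself---orthomodular by Lemma~\ref{lem:orthomodularspace}---yields that $\mathcal{H}$ is complete, hence a Hilbert space; alternatively the Amemiya--Araki theorem delivers completeness of any orthomodular pre-Hilbert space over $\mathbb{R}$ or $\mathbb{C}$ in one stroke. The step I expect to be the crux is the one that legitimises the appeal to Sol\`er: the bookkeeping identifying $I^{\{m,n\}}$ with $I \oplus I$ so that orthonormality reduces to the biproduct relation $j^\dagger \circ i = 0$, and, for arbitrary $H$, the positivity argument that must precede any normalisation, since without it the orthonormality hypothesis cannot even be stated. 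Everything downstream of Sol\`er is then routine Hilbert-space theory.
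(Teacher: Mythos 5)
Your proposal is correct and shares the paper's overall skeleton: the orthonormal family $e_n = i_{\{n\},\NN}$ in $\cat{C}(I,I^\NN)$, orthogonality reduced to $j^\dag \circ i = 0$ for the biproduct injections, Sol\`er's theorem plus commutativity to pin down $\mathcal{I}$ as $\mathbb{R}$ or $\mathbb{C}$, and the final split between the finite-dimensional case and a second application of Sol\`er (or Amemiya--Araki). Where you genuinely diverge is the positive-definiteness step. The paper gets it for free from a third use of Sol\`er: it embeds $\mathcal{H}$ form-preservingly into $\cat{C}(I, H \oplus I^\NN)$, which is orthomodular and has an infinite orthonormal subset coming from the summand $I^\NN$, hence is already a Hilbert space, so the restricted form must be an inner product. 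You instead run a direct anisotropy argument in $H \oplus I$: from a hypothetical $\inprod{h}{h} < 0$ you build an isotropic vector $v$ and derive a contradiction with $V \cap V^\perp = 0$ for $V = (\mathcal{I}v)^{\perp\perp}$. This is valid --- note only that Lemma~\ref{lem:orthomodularspace} as proved exhibits $V + V^\perp = \mathcal{H}$, so the triviality of the intersection is really supplied by the projection correspondence of Lemma~\ref{lem:ortholattices} (if $x = p\circ x = (\id_H - p)\circ x$ then $x=0$), which you should cite for that point. The trade-off: your route avoids invoking Sol\`er on the auxiliary space $H \oplus I^\NN$ but needs the explicit order and square roots in $\mathcal{I} \cong \mathbb{R}$ or $\mathbb{C}$, whereas the paper's single trick of adjoining $I^\NN$ lets Sol\`er deliver positivity and completeness uniformly without any hand computation.
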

\begin{proof}
  In a first step, we show that $e_a = i_{\{a\},A} \colon I \to I^A$ form an orthonormal subset of $\cat{C}(I,I^A)$, where $a$ ranges over an arbitrary set $A$. Because $e_a$ is a dagger monomorphism, $\inprod{e_a}{e_a} = e_a^\dag \circ e_a = \id_{I}$. Similarly, for distinct $a, b \in A$:
  \begin{align*}
    \inprod{e_a}{e_b} 
    & = e_a^\dag \circ e_b 
    = i_{\{a\},\{a,b\}}^\dag \circ i_{\{a,b\},A}^\dag \circ i_{\{a,b\},A} \circ i_{\{b\},\{a,b\}}
    = i_{\{a\},\{a,b\}}^\dag \circ i_{\{b\},\{a,b\}} 
    = 0,
  \end{align*}
  because $i_{\{a\},\{a,b\}}: I^{\{a\}} \to I^{\{a,b\}}$ and $i_{\{b\},\{a,b\}}: I^{\{b\}} \to I^{\{a,b\}}$ are inclusions of distinct summands of $I^{\{a,b\}}= I^{\{a\}} \oplus I^{\{b\}}$.

  Thus $\cat{C}(I,I^\NN)$, which is an orthomodular space over $\mathcal{I}$ by Lemma~\ref{lem:orthomodularspace}, has an infinite orthonormal subset. It follows from Sol\`er's Theorem that $\mathcal{I}$ is isomorphic to $\mathbb{R}$ or $\mathbb{C}$, see~\cite{soler} or~\cite[Theorem~1.3]{holland:soler}. It is not isomorphic to $\mathbb{H}$ because it is commutative~\cite{baez:division}.

  Let $H$ be an object of $\cat{C}$. The Hermitian space $\cat{C}(I,H \oplus I^\NN)$ is an orthomodular space by Lemma~\ref{lem:orthomodularspace}. It has an infinite orthonormal subset because the inclusion morphism $I^\NN \to H \oplus I^\NN$ is a dagger monomorphism, and thus postcomposition by this inclusion morphism is a function $\cat{C}(I, I^\NN) \to \cat{C}(I,H \oplus I^\NN)$ that preserves the Hermitian form. Appealing again to Sol\`er's Theorem, we conclude that $\cat{C}(I, H \oplus I^\NN)$ is a Hilbert space over $\mathcal{I}$.

  Similarly, postcomposing with the inclusion morphism $H \to H \oplus I^\NN$ gives a function $\mathcal{H} \to \cat{C}(I,H \oplus I^\NN)$ that preserves the Hermitian form. It follows that the Hermitian form on $\mathcal{H}$ is an inner product. If $\mathcal{H}$ has an infinite orthonormal subset, then it is a Hilbert space over $\mathcal{I}$ by Lemma~\ref{lem:orthomodularspace} and Sol\`er's Theorem. If $\mathcal{H}$ does not have an infinite orthonormal subset, then it is simply a finite-dimensional inner product space over $\mathcal{I}$ and thus also a Hilbert space over $\mathcal{I}$.
\end{proof}

If there exists a morphism $z \colon I \to I$ such that $z^\dag \neq z$, then $\mathcal{I}=\mathbb{C}$, otherwise $\mathcal{I}=\mathbb{R}$. 
This gives one way to distinguish between the complex and real cases.
Another way is to ask that there is a morphism $z \colon I \to I$ with $z \circ z = -1$.
There may be more, for example inspired by symmetries~\cite{mayet:automorphisms}.
We will abbreviate $\cat{Hilb}_{\mathcal{I}}$ to $\cat{Hilb}$ from now on.

\begin{corollary}
  The assignment of $F \colon \mathcal{H} \to \mathcal{K}$ to $f \colon H \to K$ is a functor $\cat{C}(I,-) \colon \cat{C} \to \cat{Hilb}$.
\end{corollary}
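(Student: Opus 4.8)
The plan is to recognise $\cat{C}(I,-)$ as the covariant hom-functor represented by the monoidal unit $I$, so that the functoriality axioms come for free. Indeed, postcomposition with $\id_H$ is the identity on $\mathcal{H}$, and postcomposition with $g \circ f$ is postcomposition with $g$ after postcomposition with $f$ by associativity of composition in $\cat{C}$; so $\cat{C}(I,-)$ is automatically a functor into $\cat{Set}$. What genuinely requires argument is that it lands in $\cat{Hilb}$: that each object $\mathcal{H}$ is a Hilbert space and that each $F$ is a continuous linear map. The object part is precisely Proposition~\ref{prop:hilbertspace}, which also pins down the scalars $\mathcal{I}$ as $\mathbb{R}$ or $\mathbb{C}$, matching the base field of $\cat{Hilb}$. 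Linearity of $F$ over $\mathcal{I}$ has already been established in the discussion following Lemma~\ref{lem:field}. Thus the entire corollary collapses to one analytic point: boundedness of $F$.

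I expect boundedness to be the main obstacle, since nothing in the axioms mentions continuity directly. The clean route around it is to exploit the dagger. Given $f \colon H \to K$, I would introduce the linear map $G \colon \mathcal{K} \to \mathcal{H}$ given by postcomposition with $f^\dag$. Using the definition $\inprod{a}{b} = b^\dag \circ a$ of the Hermitian form together with functoriality of the dagger, a one-line computation yields $\inprod{F(h)}{k} = k^\dag \circ f \circ h = \inprod{h}{G(k)}$ for all $h \in \mathcal{H}$ and $k \in \mathcal{K}$. Hence $F$ is an everywhere-defined linear operator between Hilbert spaces that possesses an everywhere-defined formal adjoint $G$.

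To finish, I would invoke the Hellinger--Toeplitz theorem, equivalently the closed graph theorem: an everywhere-defined linear map $F \colon \mathcal{H} \to \mathcal{K}$ between Hilbert spaces whose formal adjoint is also everywhere defined is automatically bounded. Concretely, the adjoint relation forces the graph of $F$ to be closed, for if $h_n \to h$ and $F(h_n) \to k$ then $\inprod{k}{k'} = \lim_n \inprod{F(h_n)}{k'} = \lim_n \inprod{h_n}{G(k')} = \inprod{h}{G(k')} = \inprod{F(h)}{k'}$ for every $k' \in \mathcal{K}$, so $k = F(h)$; the closed graph theorem then delivers continuity. This establishes that each $F$ is a morphism of $\cat{Hilb}$, and combined with the hom-functor observation and Proposition~\ref{prop:hilbertspace} it completes the proof that $\cat{C}(I,-)$ is a functor $\cat{C} \to \cat{Hilb}$.
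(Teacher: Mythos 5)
Your proof is correct and follows essentially the same route as the paper: both reduce the corollary to the boundedness of $F$ and obtain it from the dagger via an automatic-continuity argument of Hellinger--Toeplitz / closed-graph type. The only difference is cosmetic: the paper applies Hellinger--Toeplitz to the symmetric operator $T = \cat{C}(I, f^\dag \circ f)$ on $\mathcal{H}$ and then bounds $\|F\|$ by $\|T\|^{1/2}$, whereas you apply the closed-graph argument directly to the adjoint pair $F$ and $\cat{C}(I, f^\dag)$, which is slightly more direct.
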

\begin{proof}
  If $f : H \to K$ is a morphism in $\cat{C}$, we have to show that the function $F \colon \mathcal{H} \to \mathcal{K}$ is continuous and linear. Linearity follows from  axioms (B) and (T)~\cite[Lemma~2.6 and Proposition~2.23]{heunenvicary}.
  Similarly, postcomposition by $t=f^\dag \circ f \colon H \to H$ is a linear operator $T : \mathcal{H} \to \mathcal{H}$. For all $g,h \in \mathcal{H}$, we find
  $\inprod{h}{T(g)} 
  = h^\dag \circ (t \circ g) 
  = (t \circ h)^\dag \circ g 
  = \inprod{T(h)}{g}$, so $T$ is self-adjoint. 
  It follows from the Hellinger-Toeplitz theorem that $T$ is continuous~\cite[Corollary~III.12]{reedsimon:functionalanalysis}. 
  Because
  \begin{align*}
    \| F(g) \|^2
    = \| f \circ g\|^2 
    = g^\dag \circ t \circ g
    = \inprod{g}{T(g)} 
    \leq \|g\|^2 \|T\|\text,
  \end{align*}
  $F$ has norm at most $\|T\|^{1/2}$ and is therefore continuous.
\end{proof}

\section*{Bases}

We continue the analysis of $\cat{C}(I,I^A)$ by showing that the orthonormal set in the proof of Proposition~\ref{prop:hilbertspace} is in fact an orthonormal basis. Recall that the dimension of a Hilbert space is defined to be the cardinality of any orthonormal basis.

%

\begin{lemma}\label{lem:standardbasis}
  For any set $A$, the dimension of the Hilbert space $\cat{C}(I,I^A)$ is the cardinality~of~$A$.
\end{lemma}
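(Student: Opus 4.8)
The plan is to prove that the orthonormal set $\{e_a = i_{\{a\},A} : a \in A\}$ exhibited in the proof of Proposition~\ref{prop:hilbertspace} is an \emph{orthonormal basis} of the Hilbert space $\cat{C}(I,I^A)$. Since the dimension of a Hilbert space is the cardinality of any orthonormal basis, and $\{e_a\}$ has cardinality $|A|$, this delivers the claim. Because $\cat{C}(I,I^A)$ is a Hilbert space by Proposition~\ref{prop:hilbertspace}, it suffices to show that $\{e_a\}$ is total, that is, that its closed linear span is all of $\cat{C}(I,I^A)$.

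For each finite $R \Subset A$ I would consider the image projection $p_R = i_{R,A} \circ i_{R,A}^\dag$ of the dagger monomorphism $i_{R,A} \colon I^R \to I^A$. Since $i_{R,A}^\dag \circ i_{R,A} = \id_{I^R}$, postcomposition with $i_{R,A}^\dag$ sends $\cat{C}(I,I^A)$ onto $\cat{C}(I,I^R)$, so the closed subspace associated to $p_R$ by Lemma~\ref{lem:ortholattices} is $p_R \circ \cat{C}(I,I^A) = i_{R,A} \circ \cat{C}(I,I^R)$; as $I^R$ is a finite biproduct of copies of $I$, the matrix calculus of axiom~(B) identifies this with $\operatorname{span}\{e_a : a \in R\}$. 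The projections $p_R$ form a directed family, and since the isomorphism of Lemma~\ref{lem:ortholattices} preserves joins, the join $p = \bigvee_{R \Subset A} p_R$ corresponds to the closed span of $\{e_a : a \in A\}$. It therefore remains only to show $p = \id_{I^A}$.

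This last step is the heart of the argument and the one I expect to be the main obstacle. Let $n \colon N \to I^A$ be a dagger monomorphism representing $p$. Each $i_{R,A}$ factors through $n$ because $p_R \leq p$, yielding dagger monomorphisms $j_R \colon I^R \to N$ with $n \circ j_R = i_{R,A}$; these $j_R$ form a cocone over the directed diagram $(I^R, i_{R,S})$, so the universal property of the directed colimit $I^A$ in axiom~(C) supplies a unique dagger monomorphism $u \colon I^A \to N$ with $u \circ i_{R,A} = j_R$. Then $n \circ u$ is a dagger monomorphism commuting with the colimit cocone $\{i_{R,A}\}$, so $n \circ u = \id_{I^A}$ by the uniqueness clause of that universal property. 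Together with $n^\dag \circ n = \id_N$ this forces $u = n^\dag$ and hence $n \circ n^\dag = \id_{I^A}$, so that $n$ is a dagger isomorphism and $p = \id_{I^A}$. Consequently the closed span of $\{e_a : a \in A\}$ equals $p \circ \cat{C}(I,I^A) = \cat{C}(I,I^A)$, the orthonormal set is total, and its cardinality $|A|$ is the dimension of $\cat{C}(I,I^A)$.
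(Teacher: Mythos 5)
Your proof is correct. The decisive step --- showing that a dagger monomorphism into $I^A$ through which every $i_{R,A}$ factors must be a dagger isomorphism, by assembling the factorisations into a cocone of dagger monomorphisms and invoking the uniqueness clause of the directed colimit of axiom (C) --- is exactly the engine of the paper's own proof. The difference lies only in what that engine is applied to. The paper takes an arbitrary vector $f$ orthogonal to every $e_a$, deduces $f^\dag \circ i_{R,A} = 0$ for all $R \Subset A$, and runs the argument on $\ker(f^\dag)$ to conclude $f = 0$; this needs nothing beyond axioms (E), (K) and (C). You instead run it on the dagger subobject representing $\bigvee_R p_R$ and then translate back to closed subspaces via Lemma~\ref{lem:ortholattices}, which costs you two extra (but true) observations: that the lattice isomorphism of that lemma preserves arbitrary joins, which holds because it is an order isomorphism between complete lattices, and that $p_R \circ \cat{C}(I,I^A) = \operatorname{span}\{e_a : a \in R\}$, which holds by the finite biproduct expansion of axiom (B). You also leave the cocone condition $j_S \circ i_{R,S} = j_R$ unverified, but it follows at once from $j_R = n^\dag \circ i_{R,A}$ (or from $n$ being monic), just as the paper checks for its $g_R$. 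So your route is somewhat heavier on lattice-theoretic scaffolding while the paper's is marginally more self-contained, but both rest on the same key use of axiom (C).
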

\begin{proof}
  Proposition~\ref{prop:hilbertspace} showed that $e_a = i_{\{a\},A}$, where $a$ ranges over $A$, is an orthonormal subset of $\cat{C}(I,I^A)$. We will show that it is an orthonormal basis.
  Suppose that a vector $f : I \to I^A$ is orthogonal to $e_a$ for all $a \in A$. Given $a \in R \Subset A$, we find $f^\dag \circ i_{R,A} \circ i_{\{a\},R} = f^\dag \circ e_a = 0$.
  Because $i_{\{a\},R}: I \to I^R$ are the coprojections of the coproduct $I^R$, we have $f^\dag \circ i_{R,A} = 0$ for all $R \Subset A$. The morphism $f^\dag: I^A \to I$ has a dagger monomorphism $\ker(f^\dag) : K \to I^A$ as kernel. This gives dagger monomorphisms $g_R: I^R \to K$ such that $ker(f^\dag) \circ g_R = i_{R,A}$, for $R \Subset A$.
  These form a cocone on the diagram of dagger monomorphisms $i_{R,S}$ for $R,S\Subset A$: for if $R \Subset S \Subset A$, then
  \[
    ker(f^\dag) \circ g_S \circ i_{R,S} 
    = i_{S,A} \circ i_{R,S} 
    = i_{R,A} 
    = ker(f^\dag) \circ g_R\text,
  \]
  and because $\ker(f^\dag)$ is a dagger monomorphism, we have that $g_S \circ i_{S,R} = g_R$.
  \[\begin{tikzcd}[column sep=7em, row sep=4em]
    {I^{\{a\}}} & {I^R} & \cdots \\
    K & {I^A} & I
    \arrow["{i_{\{a\},R}}"{description}, tail, from=1-1, to=1-2]
    \arrow["{i_{R,S}}"{description}, tail, from=1-2, to=1-3]
    \arrow["{i_{R,A}}"{description}, tail, from=1-2, to=2-2]
    \arrow["{f^\dag}"{description}, from=2-2, to=2-3]
    \arrow["{\ker(f^\dag)}"{description}, from=2-1, to=2-2, tail]
    \arrow["g"{description}, out=-120, in=-60, looseness=.9, dashed, from=2-2, to=2-1, tail]
    \arrow["{g_{\{a\}}}"{description}, tail, from=1-1, to=2-1]
    \arrow["{g_R}"{description}, pos=.75, tail, from=1-2, to=2-1]
    \arrow["{e_a}"{description}, pos=.25, tail, from=1-1, to=2-2, crossing over]
    \arrow["0"{description}, from=1-3, to=2-3]
  \end{tikzcd}\]
  Therefore there exists a unique mediating morphism $g: I^A \to K$ with $g \circ i_{R,A}  = g_R$ for all $R \Subset A$. 
  Thus, $ker(f^\dag) \circ g \circ i_{R,A} = ker(f^\dag) \circ g_R = i_{R,A}$ for all $R \Subset A$, and by the universal property of the colimit $I^A$, we conclude $\ker(f^\dag) \circ g = \id_{I^A}$. It follows that $ker(f^\dag)$ is an epimorphism, and being a dagger monomorphism, it is a dagger isomorphism. Therefore, $f^\dag = 0$.

  Overall, the zero vector is the only one in the Hilbert space $\cat{C}(I,I^A)$ that is orthogonal to each vector $e_a$, for $a \in A$. Thus, $\{e_a\}_{a \in A}$ is an orthonormal basis of $\cat{C}(I,I^A)$.
\end{proof}



\section*{Equivalence}

We are now ready to prove the main result. A functor is a dagger equivalence when it is an equivalence of categories that preserves the dagger of morphisms.

\begin{theorem}\label{thm:equivalence}
  The functor $\cat{C}(I,-)\:\cat{C} \to \cat{Hilb}_{\mathcal{I}}$ is an equivalence of dagger categories, where $\mathcal{I}$ is either $\mathbb{R}$ or $\mathbb{C}$.
\end{theorem}
\begin{proof}
  Let $f : H \to K$ be a morphism in $\cat{C}$.
  For all vectors $h \in \mathcal{H}$ and $k \in \mathcal{K}$ axiom (D) shows:
  \[
    \inprod{f^\dag \circ k}{h} 
    = (f^\dag \circ k)^\dag \circ h 
    = k^\dag \circ f \circ h 
    = \inprod{k}{f \circ h}.
  \]
  Thus $\cat{C}(I, f^\dag) = \cat{C}(I, f)^\dag$, that is, $\cat{C}(I,-)$ is a dagger functor. 

  If morphisms $f,g : H \to K$ satisfy $\cat{C}(I,f) = \cat{C}(I,g)$, then $f \circ h = g\circ h$ for all vectors $h \in \mathcal{H}$, so $f = g$ by axiom (T). Therefore, $\cat{C}(I, -)$ is faithful.

  Assume $\dim(\mathcal{H}) \leq \dim(\mathcal{K})$, and let $U \colon \mathcal{H} \to \mathcal{K}$ be a linear isometry. 
  Choose an orthonormal basis $\{h_a\}_{a \in A}$ for the Hilbert space $\mathcal{H}$. Because $U$ is an isometry, the vectors $k_a = U(h_a)$ form an orthonormal subset of the Hilbert space $\mathcal{K}$. 

  Extending the dagger monomorphisms $h_a : I \to H$ and $k_a : I \to K$ for finite subsets $R \Subset A$ to dagger monomorphisms $h_R : I^R \to H$ and $k_R : I^R \to K$ using axiom (B) gives two cocones on the diagram $i_{R,S}$. Thus by axiom (C) there are unique mediating dagger monomorphisms $h_A : I^A \to H$ and $k_A : I^A \to K$ satisfying $h_A \circ e_a = h_a$ and $k_A \circ e_a = k_a$ for all $a \in A$, where $e_a = i_{\{a\},A}$ as before.

  Define $u = k_A \circ h_A^\dag : H \to K$. For each $a \in A$ then:
  \begin{align*}
    \cat{C}(I, u)(h_a) 
    & = k_A \circ h_A^\dag \circ h_a 
    = k_A \circ h_A^\dag \circ h_A \circ e_a
    = k_A \circ e_a 
    = k_a 
    = U(h_a).
  \end{align*}
  Because $\{h_a\}_{a \in A}$ is an orthonormal basis of the Hilbert space $\mathcal{H}$, we conclude that $\cat{C}(I, u) =  U$.

  Any continous linear function $\mathcal{H} \to \mathcal{K}$ is a linear combination of isometric ones, because we have assumed that $\dim(\mathcal{H}) \leq \dim(\mathcal{K})$. 
  It follows that each continuous linear function $F:\mathcal{H} \to \mathcal{K}$ is the image of a linear combination of morphisms $H \to K$ under the functor $\cat{C}(I,-)$, which is linear on morphisms by axiom (B).
  Therefore, there exists a morphism $f: H \to K$ such that $\cat{C}(I,f) = F$. 
  Combining this fact for $\dim(\mathcal{H}) \leq \dim(\mathcal{K})$ with the fact that the functor $\cat{C}(I,-)$ preserves daggers, we conclude that the functor is full.

  Finally, the functor $\cat{C}(I,-)$ is essentially surjective by Lemma~\ref{lem:standardbasis}. Being a full, faithful and essentially surjective dagger functor, it is an equivalence of dagger categories.
\end{proof}

Because equivalences preserve limits, it follows directly that the functor $\cat{C}(I,-) : \cat{C} \to \cat{Hilb}$ preserves all the structure concerned in axioms (D), (B), (E), and (K).
It also follows that the functor restricts to a full and faithful functor between the wide subcategories of dagger monomorphisms; because in $\cat{Hilb}$ any two isomorphic objects are unitarily isomorphic~\cite[8.3, 8.4]{heunenkarvonen:limitsindaggercategories}, the restricted functor is still an equivalence, and hence also the structure concerned in axiom (C) is preserved.

\section*{Tensor products}

Finally, we prove that the equivalence $\cat{C}(I,-) : \cat{C} \to \cat{Hilb}$ also preserves the tensor products of axiom (T).

For objects $H$ and $K$, the function $\mathcal{H} \times \mathcal{K} \to \cat{C}(I, H \otimes K)$ given by $(h,k) \mapsto h \otimes k$ is bilinear by axioms (B) and (T), see~\cite[Lemma~3.6, Corollary~3.20, and Lemma~3.22]{heunenvicary}. It is also bounded as a bilinear map because 
\begin{align*}
  \|h \otimes k\|^2 
  & = \inprod{h \otimes k}{h \otimes k} 
  = (h \otimes k)^\dag \circ (h \otimes k) \\
  & = (h^\dag \circ h) \otimes (k^\dag \circ k) 
  = \inprod{h}{h} \cdot \inprod{k}{k} 
  = \|h\|^2 \cdot \|k\|^2
\end{align*}
where we suppressed the isomorphism $I \otimes I \simeq I$.
This defines a continuous linear function:
\begin{equation}\label{eq:tensor}
  \begin{split}
    M_{H,K} : \mathcal{H} \otimes \mathcal{K} &\to \cat{C}(I, H \otimes K) \\
    h \otimes k & \mapsto h \otimes k.
  \end{split}
\end{equation}
Here the tensor product on the left-hand side is that of Hilbert spaces, whereas the tensor product on the right-hand side is the monoidal structure of $\cat{C}$.

\begin{lemma}\label{lem:tensor}
  The continuous linear functions of~\eqref{eq:tensor} are unitary and form the components of a natural transformation from the functor $\cat{C}(I,-) \otimes \cat{C}(I,-) : \cat{C} \times \cat{C} \to \cat{Hilb}$ to the functor $ \cat{C}(I, - \otimes -) : \cat{C} \times \cat{C} \to \cat{Hilb}$.
\end{lemma}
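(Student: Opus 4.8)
The plan is to verify three things in turn: that each $M_{H,K}$ is isometric, that it is moreover surjective and hence unitary, and that the family $(M_{H,K})$ is natural. The isometry is the computation already displayed, upgraded from norms to inner products; surjectivity is where the monoidal separator of axiom~(T) does the real work; and naturality reduces to the interchange law on elementary tensors.

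First I would check that $M_{H,K}$ preserves the Hermitian form. For elementary tensors, using that $\otimes$ is dagger monoidal,
\[
  \inprod{h \otimes k}{h' \otimes k'}
  = (h' \otimes k')^\dag \circ (h \otimes k)
  = ({h'}^\dag \circ h) \otimes ({k'}^\dag \circ k)
  = \inprod{h}{h'} \cdot \inprod{k}{k'},
\]
suppressing $I \otimes I \simeq I$; this is exactly the inner product of $h \otimes k$ and $h' \otimes k'$ in the Hilbert space tensor product $\mathcal{H} \otimes \mathcal{K}$. By sesquilinearity and continuity, $M_{H,K}$ then preserves inner products on all of $\mathcal{H} \otimes \mathcal{K}$, so it is an isometry into the Hilbert space $\cat{C}(I, H \otimes K)$ of Proposition~\ref{prop:hilbertspace}. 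In particular its range is a closed subspace.

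The crux is surjectivity, equivalently that this range has trivial orthogonal complement. Suppose $w \in \cat{C}(I, H \otimes K)$ satisfies $\inprod{h \otimes k}{w} = w^\dag \circ (h \otimes k) = 0$ for all $h \in \mathcal{H}$ and $k \in \mathcal{K}$. Then $w^\dag$ and $0_{H \otimes K, I}$ agree after precomposition with every $h \otimes k$, so the monoidal separator clause of axiom~(T) forces $w^\dag = 0_{H \otimes K, I}$, whence $w = 0$. Thus the range of $M_{H,K}$ is dense; being also closed, it is all of $\cat{C}(I, H \otimes K)$, and $M_{H,K}$ is unitary.

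Finally I would check naturality against morphisms $f : H \to H'$ and $g : K \to K'$. On an elementary tensor, postcomposing $M_{H,K}(h \otimes k) = h \otimes k$ with $f \otimes g$ yields $(f \otimes g) \circ (h \otimes k) = (f \circ h) \otimes (g \circ k)$ by the interchange law, which is precisely $M_{H',K'}$ applied to $(F \otimes G)(h \otimes k) = F(h) \otimes G(k)$. Since elementary tensors span a dense subspace and all four maps in the naturality square are continuous and linear, the square commutes. The main obstacle is the surjectivity step: the isometry and naturality are essentially bilinear bookkeeping, whereas forcing the range to exhaust $\cat{C}(I, H \otimes K)$ genuinely requires the separating property of $I$, since that is the only tool controlling morphisms $I \to H \otimes K$ that are not manifestly assembled from elementary tensors.
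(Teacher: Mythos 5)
Your proposal is correct and follows essentially the same route as the paper's proof: isometry via the inner product on elementary tensors, surjectivity of the (closed) range via the monoidal separator clause of axiom~(T), and naturality via the interchange law on elementary tensors together with density and continuity. The only cosmetic difference is that the paper establishes the isometry by noting that $M_{H,K}$ sends an orthonormal basis $\{h_a \otimes k_b\}$ to an orthonormal set, whereas you verify preservation of the Hermitian form directly and extend by sesquilinearity and continuity; the two are interchangeable.
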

\begin{proof}
  If $h,h' \in \mathcal{H}$ and $k,k' \in \mathcal{K}$, then the inner product
  $\inprod{h \otimes k}{h' \otimes k'}$ in $\cat{C}(I,H \otimes K)$ is equal to $\inprod{h}{h'} \cdot \inprod{k}{k'}$.
  Hence $M_{H,K}$ preserves inner products between elementary tensors, and
  linearity and continuity then imply that $M_{H,K}$ is an isometry.
  So its range is a closed subspace of $\cat{C}(I, H \otimes K)$. Suppose there were a nonzero vector $f$ orthogonal to $h \otimes k$ for all $h \in \mathcal{H}$ and $k \in \mathcal{K}$. In other words, $f^\dag \circ (h \otimes k) = \inprod{f}{h \otimes k} = 0$. It would then follow from axiom (T) that $f^\dag = 0$, contradicting our choice of $f$. Therefore, $M_{H,K}$ is surjective, and hence unitary.

  If $f:H_1 \to H_2$ and $g:K_1 \to K_2$ are morphisms of $\cat{C}$, then the following diagram commutes:
  \[\begin{tikzcd}[row sep = large,column sep = huge]
    \cat{C}(I,H_1) \otimes \cat{C}(I,K_1)
    \arrow{d}[swap]{\cat{C}(I,f) \otimes \cat{C}(I,g)}
    \arrow{r}{M_{H_1,K_1}}
    &
    \cat{C}(I, H_1 \otimes K_1)
    \arrow{d}{\cat{C}(I, f \otimes g)}
    \\
    \cat{C}(I,H_2) \otimes \cat{C}(I,K_2)
    \arrow{r}[swap]{M_{H_2,K_2}}
    &
    \cat{C}(I, H_2 \otimes K_2)
  \end{tikzcd}\]
  Indeed, for all $h \in \mathcal{H}_1$ and $k \in \mathcal{K}_1$:
  \begin{align*}
    \cat{C}(I, f \otimes g)&(M_{H_1, K_1}(h \otimes k))
    = \cat{C}(I, f \otimes g)(h \otimes k)
    = (f \circ h) \otimes (g \circ k) \\
    & = \cat{C}(I, f)(h) \otimes \cat{C}(I, g)(k)
    = M_{H_2, K_2}(\cat{C}(I, f)(h) \otimes \cat{C}(I, g)(k)) \\
    & = M_{H_2, K_2}((\cat{C}(I, f) \otimes \cat{C}(I, g))(h \otimes k)).
  \end{align*}
  Therefore, the unitary linear functions $M_{H,K}$ are natural in $H$ and $K$.
\end{proof}

A (dagger) functor is monoidal when it preserves (the dagger and) the tensor product up to a natural (dagger) isomorphism that respects the (dagger) isomorphisms $(H \otimes K) \otimes L \simeq H \otimes (K \otimes L)$ and that preserves the tensor unit up to (dagger) isomorphism~\cite[Section 2.4]{etingofgelakinikshychostrik:tensorcategories}. 
If a monoidal functor is an equivalence, then its adjoint is automatically also monoidal.

\begin{theorem}\label{thm:tensor}
  The functor $\cat{C}(I,-) : \cat{C} \to \cat{Hilb}$ is a monoidal dagger equivalence.
\end{theorem}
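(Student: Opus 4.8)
The plan is to promote the unitary natural isomorphisms $M_{H,K}$ of Lemma~\ref{lem:tensor} to the coherence data of a symmetric monoidal structure on $\cat{C}(I,-)$, and to discharge the monoidal coherence conditions by evaluating them on elementary tensors. Theorem~\ref{thm:equivalence} already establishes that $\cat{C}(I,-)$ is a dagger equivalence, and Lemma~\ref{lem:tensor} shows the $M_{H,K}$ are unitary and natural; since a unitary is precisely a dagger isomorphism in $\cat{Hilb}$, the $M_{H,K}$ respect the dagger automatically. It therefore remains to supply the unit comparison and to verify the coherence squares for the associators, unitors, and braidings.

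For the unit comparison, note that the tensor unit of $\cat{Hilb}$ is the ground field, which by Proposition~\ref{prop:hilbertspace} is $\mathbb{R}$ or $\mathbb{C}$, and that $\cat{C}(I,I) = \mathcal{I}$ is exactly this one-dimensional Hilbert space. The comparison is hence the canonical identification of $\mathcal{I}$ with the ground field, trivially a dagger isomorphism. After unwinding the unitors $I \otimes H \simeq H$ of axiom (T) and their counterparts in $\cat{Hilb}$, the unit coherence triangles reduce to the assertion that $M_{I,H}$ sends $z \otimes h$ to the scalar multiple $z \cdot h$; this is immediate from the definition of $M_{I,H}$ on elementary tensors together with the description of scalar multiplication in~\eqref{eq:scalarmultiplication}.

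The substance of the proof is the associativity coherence. Writing $\alpha$ for the associators, I must check that the two continuous linear maps $(\mathcal{H} \otimes \mathcal{K}) \otimes \mathcal{L} \to \cat{C}(I, H \otimes (K \otimes L))$ given by $M_{H, K \otimes L} \circ (\id \otimes M_{K,L}) \circ \alpha$ and by $\cat{C}(I, \alpha) \circ M_{H \otimes K, L} \circ (M_{H,K} \otimes \id)$ agree. Since the elementary tensors $(h \otimes k) \otimes l$ span a dense subspace and all maps in sight are bounded, it suffices to compare the two images of such a tensor. The Hilbert-space associator sends $(h \otimes k) \otimes l$ to $h \otimes (k \otimes l)$, while on the $\cat{C}$ side I compute $\cat{C}(I, \alpha)$ on the elementary tensor by naturality of the associator of $\cat{C}$ in the three vector arguments $h, k, l$, reducing it to $\alpha_{I,I,I}$, which the coherence of the monoidal structure on $\cat{C}$ identifies with the identity morphism modulo the unitors $I \otimes I \simeq I$. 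Both composites thus send $(h \otimes k) \otimes l$ to the same vector $h \otimes (k \otimes l) \in \cat{C}(I, H \otimes (K \otimes L))$. The braiding coherence is verified in the same manner, using that the symmetry $H \otimes K \simeq K \otimes H$ of axiom (T) carries $h \otimes k$ to $k \otimes h$.

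I expect the main obstacle to be the bookkeeping in the associativity step: one must carefully separate the associator of $\cat{Hilb}$ from that of $\cat{C}$, track the canonical unitors $I \otimes I \simeq I$ that are suppressed throughout, and justify restricting attention to the dense span of elementary tensors by continuity of the functor $\cat{C}(I,-)$ and the unitarity of the $M_{H,K}$. None of these steps is conceptually deep once the identifications are organised, but the diagram is large and the suppressed coherence isomorphisms must be reinstated to make each equality literal.
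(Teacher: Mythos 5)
Your proposal matches the paper's proof in essence: both take the dagger equivalence from Theorem~\ref{thm:equivalence}, use the unitary natural transformation $M$ of Lemma~\ref{lem:tensor} together with the identity on $\mathcal{I}=\cat{C}(I,I)$ as the monoidal coherence data, and discharge the associativity and symmetry squares by evaluating on elementary tensors, which suffices by density and boundedness. Your explicit treatment of the unit triangles and of the suppressed coherence isomorphisms is slightly more careful than the paper's, but it is the same argument.
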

\begin{proof}
  After Theorem~\ref{thm:equivalence}, it suffices to show that the functor $\cat{C}(I,-)$ together with the natural transformation $M$ of Lemma~\ref{lem:tensor} and the identity operator $E$ on $\mathcal{I}$ is a monoidal functor. By construction, the Hilbert space $\cat{C}(I,I)$ is just the field $\mathcal{I}$ considered as a Hilbert space over itself, and so $E$ is a dagger isomorphism from the value of the functor $\cat{C}(I,-)$ at $I$ to the tensor unit of $\cat{Hilb}_{\mathcal{I}}$.


  Writing $a_{H,K,L} : (H \otimes K) \otimes L \to H \otimes (K \otimes L)$ and $A_{\mathcal{H},\mathcal{K},\mathcal{L}} : (\mathcal{H} \otimes \mathcal{K}) \otimes \mathcal{L} \to \mathcal{H} \otimes (\mathcal{K} \otimes \mathcal{L})$ for the associators of $\cat{C}$ and $\cat{Hilb}$, the diagram
  \[\begin{tikzcd}[column sep = 6em]
    (\mathcal{H} \otimes \mathcal{K}) \otimes \mathcal{L}
    \arrow{r}{A_{\mathcal{H}, \mathcal{K}, \mathcal{L}}}
    \arrow{d}[swap]{M_{H,K} \otimes \id_{\mathcal{L}}}
    &
    \mathcal{H} \otimes (\mathcal{K} \otimes \mathcal{L})
    \arrow{d}{\id_{\mathcal{H}} \otimes M_{K,L}}
    \\
    \cat{C}(I, H \otimes K) \otimes \mathcal{L}
    \arrow{d}[swap]{M_{H \otimes K, L}}
    &
    \mathcal{H} \otimes \cat{C}(I, K \otimes L)
    \arrow{d}{M_{H, K \otimes L}}
    \\
    \cat{C}(I, (H \otimes K) \otimes L)
    \arrow{r}[swap]{\cat{C}(I,a_{H,K,L})}
    &
    \cat{C}(I, H \otimes (K \otimes L))
  \end{tikzcd}\]
  commutes because for all $h \in \mathcal{H}$, $k \in \mathcal{K}$ and $l \in \mathcal{L}$:
  \begin{align*}
  M_{H, K \otimes L} & (\id \otimes M_{K,L}(A_{\mathcal{H} \otimes \mathcal{K}, \mathcal{L}}((h \otimes k) \otimes l)))
  = M_{H, K \otimes L} (\id \otimes M_{K,L}(h \otimes (k \otimes l))) \\
  & = h \otimes (k \otimes l) 
    = a_{H,K,L} \circ ((h \otimes k) \otimes l)
  = \cat{C}(I,a_{H,K,L})((h \otimes k) \otimes l) \\
  & = \cat{C}(I,a_{H,K,L})(M_{H \otimes K, L}((M_{H,K} \otimes \id) ((h \otimes k) \otimes l))).
  \end{align*}
  Thus $\cat{C}(I,-)$ is a monoidal functor.
\end{proof}

If the category $\cat{C}$ is equipped with a dagger braiding, then the functor $\cat{C}(I, -)$ is dagger braided by a similar argument. This means that $\cat{C}(I, -)$ respects the dagger isomorphisms $H \otimes K \simeq K \otimes H$. Furthermore, the braiding on $\cat{C}$ is symmetric as a consequence of the assumption that $I$ is a monoidal separator in axiom (T).

Recall that $H$ and $H^*$ are \emph{dagger dual} objects when there is a morphism $f \colon I \to H^* \otimes H$ making the following diagram commute~\cite[Definition~3.46]{heunenvicary}:
\[\begin{tikzcd}[column sep = .5em]
  {H \otimes I} & H & {I \otimes H} \\
  {H \otimes (H^* \otimes H)} & {(H \otimes H^*) \otimes H} & {(H^* \otimes H) \otimes H}
  \arrow["{\id_H \otimes f}"', from=1-1, to=2-1]
  \arrow["\simeq"{description}, draw=none, from=2-1, to=2-2]
  \arrow["\simeq"{description}, draw=none, from=2-2, to=2-3]
  \arrow["{f^\dag \otimes \id_H}"', from=2-3, to=1-3]
  \arrow["\simeq"{description}, draw=none, from=1-2, to=1-3]
  \arrow["\simeq"{description}, draw=none, from=1-1, to=1-2]
\end{tikzcd}\]
In $\cat{Hilb}$, an object has a dagger dual if and only if it is finite-dimensional~\cite[Corollary~3.65]{heunenvicary}.
Because monoidal dagger functors preserve dagger dual objects~\cite[Theorem~3.14]{heunenvicary}, it follows from Theorem~\ref{thm:tensor} that the finite-dimensional Hilbert spaces can be categorically axiomatised within $\cat{C}$ as the dagger dual objects.

\bibliographystyle{plain}
\bibliography{bibliography}

\end{document}